\theoremstyle{plain}
\newtheorem{THEOREM}{Theorem}[section]
\newtheorem{theorem}[THEOREM]{Theorem}
\newtheorem{lemma}[THEOREM]{Lemma}
\theoremstyle{definition}
\theoremstyle{remark}
\newtheorem{remark}[THEOREM]{Remark}
\newtheorem{example}[THEOREM]{Example}
\newcommand{\thm}[1]{Theorem~\ref{#1}}
\newcommand{\lem}[1]{Lemma~\ref{#1}}
\newcommand{\rem}[1]{Remark~\ref{#1}}
\newcommand{\N}{\ensuremath{\mathbb{N}}}   
\newcommand{\Z}{\ensuremath{\mathbb{Z}}}   
\newcommand{\R}{\ensuremath{\mathbb{R}}}   
\newcommand{\T}{\ensuremath{\mathbb{T}}}   
\newcommand{\K}{\ensuremath{\mathbb{K}}}
\renewcommand{\S}{\ensuremath{\mathbb{S}}}
\def \d {\delta}
\def \e {\varepsilon}
\def \f {\varphi}
\def \k {\kappa}
\def \l {\lambda}
\def \L {\Lambda}
\def \n {\nabla}
\def \th {\theta}
\def \D {\Delta}
\def \O {\Omega}
\def \bs {\backslash}
\def \calC {\mathcal{C}}
\def \K {\mathcal{K}}
\def \calZ {\mathcal{Z}}
\def \U {\mathcal{U}}
\def \calD {\mathcal{D}}
\def \lan {\langle}
\def \ran {\rangle}
\def \p {\partial}
\def \ra {\rightarrow}
\def \ss {\subset}
\DeclareMathOperator{\supp}{supp} %
\DeclareMathOperator{\interior}{int} %
\DeclareMathOperator{\conv}{conv} %
\DeclareMathOperator{\diver}{div} %
\DeclareMathOperator{\dist}{dist} %
\DeclareMathOperator{\Rg}{Rg} %
\newcommand{\rest}[2]{#1\raisebox{-0.3ex}{\mbox{$\mid_{#2}$}}}
\begin{document}

\title[Convex integration]{Convex integration for a class of active scalar equations}

\author{R. Shvydkoy}
\thanks{The work was partially supported by NSF grant DMS -- 0907812. The author is grateful to S. Friedlander, F. Gancedo, V. \v{S}verak and V. Vicol for stimulating discussions.}
\address{Department of Mathematics, Stat. and Comp. Sci.\\
 851 S  Morgan St., M/C 249\\
        University of Illinois\\
        Chicago, IL 60607}
\email{shvydkoy@math.uic.edu}

\date{\today}

\begin{abstract}
We show that a general class of active scalar equations, including porous media and certain magnetostrophic turbulence models, admit non-unique weak solutions in the class of bounded functions. The proof is based upon the method of convex integration recently implemented for equations of fluid dynamics in \cite{ds1,spanish}. 
\end{abstract}



\maketitle

\section{Introduction}

We study weak solutions to a general class of non-dissipative active scalar equations
\begin{align}
\th_t + u \cdot \n \th & = 0, \label{ase}\\
\diver_x u & = 0, \label{div}\\
u & = T[\th] \label{int}
\end{align}
on the periodic domain $\T^n$ with zero mean condition:
\begin{equation}\label{mean}
\int_{\T^n} u dx = \int_{\T^n} \th dx = 0.  
\end{equation}
Here $T$ is a Fourier multiplier in the spacial direction only:
$$
\widehat{T[\th]}(\xi) = {m(\xi)}\hat{\th}(\xi), \quad \xi \in \Z^n.
$$
We make the following assumptions about $m$. We assume that $m: \R^n \bs \{0\} \ra \R^n$ is even, $0$-homogeneous, and $m(\xi) \cdot \xi = 0$ a.e. Note that $m$ is not necessarily smooth or even bounded, so the operator $T$ is not assumed to be of Calderon-Zygmund type.

We say that the pair $(\th,u)\in L^2_{loc}(\R \times \T^n)$ is a weak solution of the system \eqref{ase}--\eqref{mean} on $\R$ if for every $\phi \in C_0^\infty(\R \times \T^n)$ the following holds
$$
\int_{\R\times\T^n} \th ( \p_t \phi + u \cdot \n \phi) dx dt = 0,
$$
$T[\th]$ defines a distribution for a.e. $t\in \R$,  \eqref{div}, \eqref{int} hold in the distributional sense for a.e. $t\in \R$, and \eqref{mean} holds in the usual sense.

Well-posedness theory for this type of active scalar equations of course depends on the specifics of the non-local relation \eqref{int}. It is clear however that if $T:L^2 \ra L^2$ is bounded, the standard energy method applies to produce unique local in time solutions in $H^{n/2 + 1 + \e}$. As we will see the boundedness of $T$ is not always guaranteed even for some physically relevant examples (see below). 

The primary motivation for studying weak solutions of hydrodynamic equations comes from our attempt to understand the laws of turbulence in the limit of infinite Reynolds number. As implied by the classical K41 theory of Kolmogorov \cite{k41} and Onsager's conjecture \cite{onsager} the limiting solutions to the Navier-Stokes equations ought to have sharp Besov regularity $1/3$ and anomalous energy dissipation (see \cite{eyink,shv} for recent accounts). Active scalar equations are viewed both as a viable replacement to the fully equipped models based on Navier-Stokes and Euler equations, and as models of their own as in the case of recently proposed Moffatt's magnetostrophic turbulence of the fluid Earth's core \cite{moffatt}. Starting from Kraichnan's theory of anomalous scaling for passive scalars \cite{kr} subsequent more rigorous observations were made for instance in \cite{const} in the context of active scalars of the type we consider here. 

Although no deterministic solutions are known to exist that satisfy all the characteristics of a turbulent flow, first anomalous examples for the Euler equation were found by Scheffer \cite{scheffer1} and Shnirelman \cite{shn1,shn2}, where the velocity field $u\in L^2_tL^2_x$ has a finite support in time. Recently the spacial regularity was improved to $L^\infty$ in the groundbreaking works of De Lellis and  Sz{\'e}kelyhidi \cite{ds1,ds2}. The Euler equation there is viewed as a hyperbolic conservation law with constraints in the spirit of Tartar \cite{tartar}. Specifically, $\diver_{t,x} U = 0$, with $U(t,x) \in K$, where $U$ is an $(n+1)\times (n+1)$ matrix, and $K$ is a set of matrices that encodes the nonlinearity as a pointwise constraint. The method of convex integration is then used to "reach" the constraint $K$ by superimposing oscillatory plane wave solutions from within the more relaxed set $\interior\{ \conv (K)\}$ (see \cite{gromov,kms,ms,spring} for the origins and many notable applications of the method). Following a similar -- although in many technical details much different -- approach Cordoba, Faraco, and Gancedo \cite{spanish} showed the same result for the periodic 2D porous media equation, whereby proving non-uniqueness in the class of bounded weak solutions. 
The main objective of this work is to explore the extent to which the method of convex integration applies to construct "wild" weak solutions to the general class of active scalar equations \eqref{ase}--\eqref{mean}. 

To state our result we call a point $\xi \in \S^{n-1}$ regular if $m: \S^{n-1} \ra \R^n$ is a local $C^1$-immersion near $\xi$. 
\begin{theorem}\label{t:main}
Suppose that
\begin{itemize}
\item[(i)] $m(\xi)$ is even and $0$-homogeneous;
\item[(ii)] the range $\{ m(\xi):  \xi \in \S^{n-1} \text{ regular } \}$ spans $\R^n$.
\end{itemize}
Then there exists a weak solution $(\th,u)$ to \eqref{ase}--\eqref{mean} such that
\begin{enumerate}
\item $\th, u \in L^{\infty}(\R \times \T^n)$;
\item $\th = u = 0$ for all $t \not \in (0,T)$;
\item $|\th(x,t)| = 1$ for a.e.  $t\in(0,T)$ and $x \in \T^n$.
\end{enumerate}
\end{theorem}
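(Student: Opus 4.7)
The plan is to follow the Tartar--De Lellis--Sz\'ekelyhidi framework of differential inclusions combined with a Baire category argument, adapting the convex integration scheme of \cite{spanish} to the present setting and exploiting the regularity and spanning hypotheses on $m$ to produce a sufficiently rich family of plane waves.

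First I would reformulate \eqref{ase}--\eqref{int} as a linear system with a pointwise constraint. Introduce an auxiliary vector field $v \in \R^n$ replacing the flux $u\th$, so that the system becomes
$$
\p_t \th + \diver_x v = 0, \qquad \diver_x u = 0, \qquad u = T[\th],
$$
supplemented by the pointwise constraint $v = u\th$ together with $|\th|=1$. A \emph{subsolution} is then a triple $(\th,u,v)\in L^\infty(\R\times\T^n)$, compactly supported in $(0,T)\times\T^n$, solving the linear system distributionally and taking values in an open convex relaxation $K^0$ of the target constraint set, analogous to the one used in \cite{ds1,spanish}. The trivial subsolution $(0,0,0)$ is the starting point of the construction.

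The crucial step is a plane-wave lemma describing the wave cone of the linear system. An oscillatory ansatz $(\th,u,v) = (\bar\th,\bar u,\bar v)\,e^{i(\xi\cdot x + ct)}$ solves the linear system precisely when
$$
c\bar\th + \xi\cdot\bar v = 0, \qquad \xi\cdot\bar u = 0, \qquad \bar u = m(\xi)\bar\th.
$$
The identity $m(\xi)\cdot\xi = 0$ makes the last two equations compatible for every $\bar\th$, while $c$ and the component of $\bar v$ transverse to $\xi$ remain free. When $\xi$ is a regular point of $m$, the $C^1$-immersion property in a neighborhood of $\xi$ allows such plane waves to be realized by smooth fields with $u = T[\th]$ exactly, via frequency localization inside a conical neighborhood of $\xi$ and using $0$-homogeneity to control high-frequency scaling. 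The spanning hypothesis (ii) then guarantees that finite combinations of such localized plane waves cover enough directions in $(\th,u,v)$-space to move any strict subsolution toward the extreme set $\{|\th|=1\}$.

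Once this is in hand, a standard Baire category argument closes the proof. Let $X_0$ denote the set of smooth strict subsolutions supported in $(0,T)\times\T^n$, and let $X$ be the closure of $X_0$ in the weak-$*$ $L^\infty$ topology in $\th$, which is metrizable on bounded sets. The functional $I(z) = \int_{(0,T)\times\T^n}(1-\th^2)\,dx\,dt$ is bounded and Baire-1 on $X$, so its set of continuity points is residual. The plane-wave lemma together with a potential-theoretic correction (to impose the linear constraints exactly, as in \cite{ds1}) yields the quantitative perturbation property: whenever $I(z)>0$ there exists $z' \in X_0$ arbitrarily close to $z$ in the weak topology with $I(z') \le I(z) - \eta(I(z))$ for some strictly increasing $\eta > 0$. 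Every continuity point of $I$ is then forced to satisfy $I = 0$, i.e.\ $|\th|=1$ a.e., and such a continuity point is automatically a weak solution of \eqref{ase}--\eqref{mean} with the properties (1)--(3).

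The principal obstacle, as I see it, lies in the plane-wave lemma of step two. Since $m$ is neither smooth nor globally bounded, the constructions available for Calder\'on--Zygmund multipliers used in the porous media case of \cite{spanish} do not apply directly; one must confine the oscillations to conical neighborhoods of regular frequencies and verify that the resulting localized plane waves obey $u = T[\th]$ and the divergence-free constraint with an error that is negligible in the chosen topology. The spanning assumption (ii) is exactly what is needed so that, despite being restricted to the regular part of $m$, one still has a sufficient set of perturbation directions to push the convex integration all the way to the target constraint rather than stalling at an intermediate relaxed state.
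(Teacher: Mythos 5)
Your overall architecture matches the paper's: reformulate as a linear differential system plus a pointwise constraint, build oscillatory plane-wave solutions supported in conical neighborhoods of regular frequencies (so that $u=T[\th]$ can be enforced despite $T$ not being Calder\'on--Zygmund), and close with a Baire-category argument over the weak-$*$ closure of smooth subsolutions. The plane-wave computation and the observation that $m(\xi)\cdot\xi=0$ makes the divergence and multiplier constraints compatible are both correct, and your emphasis on frequency localization as the remedy for the unbounded symbol is precisely the paper's strategy.

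The genuine gap is in your Step 3, where you posit ``an open convex relaxation $K^0$ of the target constraint set, analogous to the one used in \cite{ds1,spanish}'' and then assert that the spanning hypothesis (ii) gives ``a sufficient set of perturbation directions to push the convex integration all the way to the target.'' This is exactly the part that does not go through by analogy. The constraint set $K=\{(\th,\th u,u)\}$ with $|\th|=1$ is a low-dimensional manifold, and the available wave directions are not all of $\L$ but only the restricted cone $\L_W$, where the velocity row must be a multiple of $m(\xi)$ for regular $\xi$; in particular the $u$-component of an oscillation is confined to a scaled copy of the hypersurface $m(W)$ rather than an open set of $\R^n$. A convex relaxation in the usual sense would contain points with $u$-values unreachable by any $\L_W$-segment ending on $K$, so it cannot serve as the ambient set for the iteration. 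The paper instead constructs a non-convex open set $\U$ as a union of $\L_W$-convex hulls $\conv^{\L_W}\{B_\d(A_0), T_j(B_\d(A_0))\}$, where the four maps $T_j$ come from a $T4$-configuration: one picks two ``screens'' $S_1=m(W_1)$, $S_2=m(W_2)$ and a fixed point $a$, and projects each admissible $x\in B_{\d_0}(q_0)$ through $a$ onto the two screens, giving four matrices $T_j(A)\in K$ with $A\in\conv\{T_j(A)\}$ and each $T_j(A)-A\in\L_W$. Proving that these projections are open maps (\lem{l:t4}), that $\U$ is open (\lem{l:open}), and that the iterated $T4$ oscillations accumulate $L^2$ mass comparable to $\dist\{A,\K\}$ (\lem{l:func}) is the heart of the argument, and it is precisely where hypothesis (ii) is spent: it guarantees the existence of two regular screens $S_1$, $S_2$ crossed transversally by an interval $I$, which is what makes the $T4$-configuration nondegenerate. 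Without this geometric step the claimed quantitative perturbation property for your functional $I(z)$ is unsupported, and the Baire argument has nothing to run on.

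A secondary, smaller point: in the final passage to the limit you need to verify that $u=T[\th]$ survives, which is not automatic since $T$ need not be $L^2$-bounded. The paper handles this by propagating, through every step of the construction, the condition that $\widehat{\th(t,\cdot)}$ and $\widehat{u(t,\cdot)}$ are supported in the fixed cone over $W_1\cup(-W_1)\cup W_2\cup(-W_2)$ (condition (5) of \lem{l:func} and condition (3) in the definition of $X_0$), on which $m$ is bounded and continuous. Your sketch gestures at this but you should make it a structural invariant of the space $X_0$, not just an ingredient of the plane-wave lemma.
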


Our proof employs elements, such as the use of persistent oscillatory waves and the $T4$-configurations,  already present in \cite{ds1,ms,scheffer2} and particularly in \cite{spanish}. We emphasize however that the integral relation between velocity and scalar \eqref{int} in general cannot be interpreted in the divergence form or as any first order linear differential relation suitable to the traditional Tartar's framework (see examples below). It is therefore necessary to treat \eqref{int} as a separate constraint to be satisfied along with the local pointwise constraint as described earlier. As a byproduct of our construction we show that the solutions may have Fourier support in narrow conical regions.

\begin{theorem} There exists two regular frequency vectors $\xi_1,\xi_2 \in S^{n-1}$ such that for any relatively open neighborhoods $W_1, W_2 \ss S^{n-1}$ of $\xi_1$ and $\xi_2$, respectively, there exists a weak solution $(\th,u)$ satisfying all of the properties stated in \thm{t:main} and in addition
\begin{equation}\label{}
\supp\{ \widehat{\th(t,\cdot)}, \widehat{u(t,\cdot)} \} \ss \R \cdot (W_1 \cup W_2)
\end{equation}
for almost every $t \in (0,T)$.
\end{theorem}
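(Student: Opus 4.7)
The plan is to inspect the iterative convex integration scheme that yields \thm{t:main} and verify that, with an appropriate choice of the two base directions fixed once at the outset, all oscillatory plane-wave corrections produced at every stage of the iteration can be installed with frequency directions confined to $W_1 \cup W_2$. The spectral restriction is then preserved under the weak-$*$ closure argument and transferred to the final weak solution.

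By hypothesis (ii) of \thm{t:main}, the vectors $m(\xi)$ at regular frequencies $\xi \in \S^{n-1}$ span $\R^n$. I would first choose $\xi_1, \xi_2$ to be regular, with $m(\xi_1), m(\xi_2)$ linearly independent, and such that the associated state vectors --- together with their reflections $(-\xi_j, m(\xi_j))$ available from $m(-\xi)=m(\xi)$ --- realize a non-degenerate $T_4$-configuration in the constraint set underlying the main construction. Since this non-degeneracy is an open condition, such a pair is generic among regular pairs. Next, since $m$ is a $C^1$-immersion near each $\xi_j$, for arbitrarily small neighborhoods $W_j \ni \xi_j$ the image $\{m(\eta) : \eta \in W_j\}$ contains an open piece of the image hypersurface near $m(\xi_j)$. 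Consequently, the finitely many plane-wave directions needed to realize the $T_4$-step in the proof of \thm{t:main} can be chosen from $W_1 \cup W_2$, and, by $0$-homogeneity of $m$, can be realized by integer frequencies in $\zinno \cap \R\cdot(W_1 \cup W_2)$, giving bona fide plane waves on $\T^n$.

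Running the convex integration iteration with this restricted palette, every approximate solution has spatial Fourier support in the cone $\R\cdot(\overline{W}_1 \cup \overline{W}_2)$. Since the set of $L^\infty(\R\times\T^n)$ functions with spatial Fourier support in a given closed cone is weak-$*$ closed, the limiting solution produced by the Baire-category scheme of \thm{t:main} inherits this support property, proving the theorem. The main obstacle is the geometric step above: one must show that the $T_4$-configuration is robust under arbitrarily small $C^0$-perturbations of the plane-wave directions, and that the quantitative estimates driving the iteration (spectral gap, amplitudes, decay rate of the residual error) remain uniform as $W_1, W_2$ shrink toward the points $\xi_1, \xi_2$. This robustness should follow from the openness of the $T_4$-non-degeneracy in the state space combined with the $C^1$-immersion property of $m$ at regular points.
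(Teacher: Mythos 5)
Your proposal is essentially the paper's approach: fix two regular directions $\xi_1,\xi_2$, restrict the wave cone $\L_W$ (and hence every plane-wave perturbation produced by Lemma~\ref{l:osc}, via property (6*)) to frequencies in $W_1\cup W_2$, build the Fourier-support condition into the definition of the space $X_0$, and observe it is inherited by the weak-$*$ Baire-category limit. Two small corrections: the relevant geometric hypothesis is not linear independence of $m(\xi_1),m(\xi_2)$ but transversality of the segment joining them to the image surfaces $m(W_j)$, which is what makes the $T_4$-point maps of Lemma~\ref{l:t4} open; and there is no uniformity issue as $W_1,W_2$ shrink, since the theorem produces a separate solution for each fixed pair of neighborhoods, so the constants in the scheme are free to depend on them.
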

We will see that there are in fact infinitely many such distinct pairs $\xi_1$, $\xi_2$. Consequently, we can construct infinitely many mutually orthogonal solutions.

Examples of equations covered by \thm{t:main} arise in a variety of physical circumstances. We present a few of them here.
\begin{example} The 2D porous media equation governs evolution of the density $\rho$ of an incompressible fluid:
$$
\rho_t + u \cdot \n \rho = 0,
$$
with $\diver u = 0$, where $u$ and $\rho$ are related via Darcy's law:
$$
u = - \n p - ( 0, \rho ).
$$
Here we set all physical constants to $1$. Eliminating the pressure $p$ from the relation above one obtains $u = T[\rho]$, where 
$$
m(\xi) = |\xi|^{-2} (\xi_1\xi_2, - (\xi_1)^2 ).
$$
We see that every frequency vector $\xi$ is a regular point for the symbol, and $\Rg(m) = \{z\in \R^2: |z + (0,1/2)| = 1/2 \}$, which clearly satisfies (ii). \thm{t:main} in this particular case was proved in \cite{spanish}. As noticed in \cite{spanish}  Darcy's law can be written in divergence form:
$$
\diver_x( -u_2-\rho, u_1) = 0
$$
and as a consequence the full system \eqref{ase}--\eqref{int} can be written as $\diver_{t,x} U=0$ with the appropriately defined $3$-by-$3$ matrix-valued function $U$. 

In 3D, the symbol becomes
$$
m(\xi) = |\xi|^{-2} \left(  \xi_1 \xi_3 ,  \xi_2 \xi_3, -\xi_1^2- \xi_2^2 \right).
$$
The range here is the sphere of radius $1/2$ centered at $(0,0,-1/2)$. So, the theorem applies.
We see on this example that the relation $u = T[\th]$ cannot be written in divergence form. This necessitates the departure from the traditional hyperbolic system setup as in \cite{spanish,ds1}.
\end{example}

\begin{example} This next example arises in the context of magnetostrophic turbulence inside the Earth's core as proposed by Moffatt \cite{moffatt}. The scalar $\th$ depends on three spacial coordinates and represents the buoyancy coefficient. The symbol of the operator $T$ was derived in \cite{vlad} and is given by
$$
m = ( M_1, M_2, M_3 ),
$$
\begin{align*}
M_1(\xi) &= \frac{  \xi_2 \xi_3 |\xi|^2 + \xi_1\xi_2^2 \xi_3}{  \xi_3^2 |\xi|^2 +  \xi_2^4} \\
M_2(\xi) &= \frac{ - \xi_1 \xi_3 |\xi|^2 + \xi_2^3  \xi_3}{  \xi_3^2 |\xi|^2 +  \xi_2^4} \\
M_3(\xi) &= \frac{- \xi_2^2(\xi_1^2 + \xi_2^2)}{  \xi_3^2 |\xi|^2 +  \xi_2^4}. \\
\end{align*}
In this case the symbol is not bounded on the sphere $\S^{2}$, and hence $T$ is not bounded on $L^2$. In fact, as shown in \cite{vlad}, $T$ acts as an operator of order $-1$, thus sending $L^2$ into $H^{-1}$. However, one can easily check that the hypotheses of \thm{t:main} are satisfied as there are plenty of regular points away from the $\xi_1$-axis, on which the symbol spans all of $\R^3$. It is notable that the corresponding viscous equation
$$
\th_t + u \cdot \n \th = \nu \D \th,
$$
as shown in \cite{vlad} using the Di Georgi iteration, admits global smooth solutions, even though the equation is critical to the use of energy methods. 
\end{example}

Finally we note that the symbol of the classical surface quasi-geostrophic equation is odd:
$$
m(\xi) = i |\xi|^{-1} (-\xi_2, \xi_1).
$$
So, this case is out of the scope of \thm{t:main} (see also the discussion in \cite{spanish}).

\section{Wave cone}
Following the traditional approach we write
\begin{align}
\diver_{t,x} U & = 0 \label{rel1} \\
U &= \begin{pmatrix}
     \th & q   \\
       0 &  u
\end{pmatrix}\\
u & = T[\th]. \label{rel3}
\end{align}
It is clear that any weak solution to \eqref{rel1}--\eqref{rel3} that belongs pointwise to a bounded subset of
 $$
K = \left\{ \begin{pmatrix}
      \th &  \th u  \\
      0&  u
\end{pmatrix}: \th \in \R, u\in \R^n \right\},
$$
yields a weak solution to our original system \eqref{ase}--\eqref{int}.

We first construct localized oscillatory solutions to \eqref{rel1} -- \eqref{rel3} along certain allowed directions. For this purpose we consider the set 
$$
\L = \left\{ \begin{pmatrix}
      \th &  q  \\
       0 &  \th m(\xi)
\end{pmatrix}:  \th \neq 0,  \xi \in \S^{n-1} \text{ is regular }, q \in \R^n\right\}.
$$
From now on we will repeatedly use the following notation for the space-time domain $\O_T = (0,T) \times \T^n$, and the space-time variable $y = (t,x)$.
\begin{lemma} \label{l:osc} 
Let us fix an open subdomain $\Omega \ss \O_T$, $\e >0$, and $\l\in (0,1)$. Then for every element of the wave cone $L \in \L$ there is a sequence of matrix-functions $\{Z_k\}_{k=1}^\infty \ss C_0^\infty(\O_T)$ satisfying the relaxed problem \eqref{rel1} -- \eqref{rel3} and such that 
\begin{enumerate}
  \item $Z_k \ra 0$ weakly$^*$;
  \item $\sup_{y \not \in \O} |Z_k(y)| < \e$, for all $k \in \N$;
  \item $\sup_{y \in \O} \dist\{ Z_k(y), [-(1-\l)L,\l L] \} <\e$;
  \item $\left| \{ y \in \O:  Z_k(y) \in B_\e(\l L)\} \right| > (1-\l)(1-\e)|\O|$;
  \item $\left| \{y \in \O:  Z_k(y) \in B_\e(-(1-\l) L) \} \right| > \l (1-\e)|\O|$;
\end{enumerate}
\end{lemma}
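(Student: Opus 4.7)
I would construct $Z_k$ as a localized, high-frequency oscillation along $L$. First look for a space-time direction $N=(\tau,\eta)\in \R^{n+1}$ such that the plane wave $y\mapsto h(kN\cdot y)L$ is annihilated by $\diver_{t,x}$, i.e.\ $LN=0$. This decouples into $\th\tau + q\cdot\eta = 0$ and $\th\, m(\xi)\cdot\eta = 0$. Since $m(\xi)\cdot\xi=0$, I take $\eta = \xi$ and $\tau = -q\cdot\xi/\th$. For the wave to descend to the torus I first replace $\xi$ by a rational regular direction nearby, possible because the regular set is open and the continuity of $m$ lets me absorb the approximation error into $\e$. Fix also a periodic mean-zero profile $h\in C^\infty(\R/2\pi\Z)$ with $h\in[-(1-\l),\l]$ such that $|\{h\in B_\e(\l)\}|/2\pi>(1-\l)(1-\e)$ and $|\{h\in B_\e(-(1-\l))\}|/2\pi>\l(1-\e)$, and a cutoff $\chi\in C_0^\infty(\O)$ with $\chi\equiv 1$ on a subset of $\O$ of measure $>(1-\e)|\O|$.

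\textbf{Construction and correction.} Define
\begin{align*}
\th_k(y) &= \chi(y)\, h(kN\cdot y)\, \th, \\
u_k(y) &= T[\th_k(t,\cdot)](x), \\
q_k(y) &= \chi(y)\, h(kN\cdot y)\, q + r_k(y).
\end{align*}
The nonlocal constraint \eqref{rel3} and $\diver_x u_k=0$ hold by construction (the latter because $m(\zeta)\cdot\zeta=0$ in frequency space). To enforce the transport row $\p_t\th_k+\diver_x q_k=0$, one computes
\[
\p_t(\chi h\th)+\diver_x(\chi h q)=\chi h' k(\tau\th+\xi\cdot q)+h(\chi_t\th+\n\chi\cdot q)=h(kN\cdot y)\,g(y),
\]
where the $h'$ term vanishes by the choice of $N$ and $g$ is smooth. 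Set $r_k = -\n_x\D_x^{-1}[h(kN\cdot y)g(y)]$. Because the $x$-spectrum of $hg$ clusters in $O(1)$-neighborhoods of the lattice points $\pm jk\xi$, a Bernstein estimate gives $\|r_k\|_{L^\infty}=O(1/k)$. The matrix $Z_k$ assembled from $(\th_k,q_k,u_k)$ then satisfies \eqref{rel1}--\eqref{rel3} exactly.

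\textbf{Verification and main obstacle.} Weak$^*$ convergence $Z_k\rightharpoonup 0$ is immediate from Riemann--Lebesgue. On $\supp\chi\ss\O$ the main term $\chi hL$ lies within $\e$ of the segment $[-(1-\l)L,\l L]$; outside $\supp\chi$ the functions $\th_k$ and the main part of $q_k$ vanish, and $r_k$ is uniformly small. Properties (4)--(5) follow because for large $k$ the phase $kN\cdot y$ equidistributes on $[0,2\pi)$ over the set $\{\chi\equiv 1\}$, so its preimages under $h$ inherit the designed relative measures. The chief subtlety is property (2) for $u_k$: since $T$ is nonlocal, $u_k=T[\th_k]$ has no a priori spatial localization. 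I handle this by writing
\[
u_k = m(\xi)\,\th_k + R_k, \qquad R_k = (T - m(\xi)\Id)[\th_k].
\]
The first term is supported in $\supp\chi\ss\O$. For the remainder, $\widehat{\th_k}(\zeta)$ concentrates in $O(1)$-packets around $\pm jk\xi$, and the $C^1$-immersion property at the regular point $\xi$ together with $0$-homogeneity of $m$ yields $|m(\zeta)-m(\xi)|\lesssim|\zeta-jk\xi|/k$ inside these packets. A Young-type estimate in Fourier space then gives $\|R_k\|_{L^\infty}=O(1/k)$, so $u_k$ is uniformly close to the spatially supported function $m(\xi)\th_k$, and (2) holds for $k$ sufficiently large. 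This passage from the nonlocal $T$ to multiplication by $m(\xi)$ at high frequency, enabled by regularity of $\xi$ and homogeneity of $m$, is the essential technical departure from the purely local convex-integration framework of \cite{ds1,spanish}, and is the main point where the proof requires hypothesis (i) and the notion of regular point.
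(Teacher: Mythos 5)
Your overall strategy differs genuinely from the paper's. The paper does not use a plane-wave ansatz plus a $\nabla_x\Delta_x^{-1}$ corrector; it instead builds the space-time divergence-free field directly from potentials $\calD_1(\psi)$ and $\calD_i(\phi)$, with $\psi = \delta^2 F(\cdot/\delta)h$ and $\phi = \delta F'(\cdot/\delta)h$, where $F''=f$ approximates the two-valued step function and $h$ is a cutoff. That choice makes the row divergence-free by construction and avoids any solvability constraint. Your route is legitimate in spirit, and your explanation of why $T$ can be replaced by multiplication by $m(\xi)$ at high frequency is the right idea and matches the paper's computation.

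However, there is a real gap in how you treat the nonlocal operator, and it matters precisely for the class of symbols the paper is designed to cover. You take $\chi\in C_0^\infty$ and $h\in C^\infty$ generic, so $\widehat{\theta_k}$ only \emph{concentrates} near the rays $\pm jk\xi$ with rapidly decaying tails; its support is all of $\Z^n$. Since $m$ is assumed only $0$-homogeneous and locally $C^1$ near regular directions, and is explicitly allowed to be unbounded (the magnetostrophic example has unbounded $m$ on $\S^{n-1}$, so $T$ is not even $L^2$-bounded), the tail of $\widehat{\theta_k}$ lands in regions where $m$ is uncontrolled, and neither $T[\theta_k]\in L^\infty$ nor $\|R_k\|_\infty = O(1/k)$ follows from a Bernstein/Young estimate. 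This is exactly why the paper truncates the Fourier series of both the profile $f$ and the cutoff $h$, producing a $\theta$ with \emph{finite} Fourier support contained in $B_R(\pm\xi_0/\delta)$; for $\delta$ small this support projects radially into the neighborhood $W\cup(-W)$ where $m$ is $C^1$, making the decomposition $u=m(\xi_0)\theta + \text{small}$ rigorous. You must impose an analogous finite (or at least conical) support condition on $\theta_k$, e.g.\ by replacing $\chi$ and $h$ with trigonometric polynomials as in the paper; otherwise the proof fails already for the Moffatt symbol.

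A secondary, more technical gap: setting $r_k = -\nabla_x\Delta_x^{-1}[h(kN\cdot y)g(y)]$ presupposes that $h(kN\cdot y)g(y)$ has zero spatial mean for each $t$, which is not exact (it is small for large $k$ after arranging $k\xi\in\Z^n$, but not zero). You need to subtract the mean and argue the residual is $O(k^{-\infty})$, or else avoid the issue entirely by using a potential construction as in the paper.
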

Notice that the sequence $Z_k$ is only supported in $\O$ up to a small error, yet it does lie within our general temporal limits $(0,T)$.
\begin{proof}
Let us fix our notation for $L$:
$$
L=\begin{pmatrix}
      \th_0 &  q_0  \\
       0 &  \th_0 m(\xi_0)
\end{pmatrix},
$$
for some $\th_0 \neq 0$,  regular $\xi_0 \in \S^{n-1}$, and  $q_0 \in \R^n$. Our primary goal at this point is to construct a divergence-free vector field $V(y)$ that oscillates around the first row of $L$, i.e. vector $(\th_0,q_0)\in \R^{n+1}$. 
Let us write $\xi_0 = \lan \k^0_1,...,\k^0_n \ran$ and assume for definiteness that $\k^0_1 \neq 0$. We will find $V$ as a linear combination of divergence-free fields of the form
\begin{align}
\calD_1(\psi) & = \lan \D_x \psi, - \p_{t,x_1} \psi, \ldots, - \p_{t,x_n} \psi \ran \\
\calD_i(\phi) & = \lan 0, -\p_{x_i} \phi, 0, \ldots, 0, \p_{x_1} \phi,0, \ldots \ran, \quad i = 2,...,n,
\end{align}
where the potentials $\phi$ and $\psi$ are smooth scalar functions to be determined later. It is easily seen that regardless of their choice, the fields $\calD_i$, $i=1,...,n$ are divergence-free.

Let us fix the function $f= \l \chi_{[0,1-\l)} - (1-\l) \chi_{[1-\l, 1)}$ and extend it priodically from the unit interval $[0,1)$ to $\R$. Let us fix $\e_1>0$ and consider a $1$-periodic trigonometric sum $\tilde{f}$ obtained from $f$ via truncation of  the Fourier series of $f$. By choosing the truncation far enough we can ensure the following two conditions:
 \begin{align}
 \left|\{s: |\tilde{f}(s) - \l | <\e_1 \}\right| & > (1-\l)(1-\e_1) \label{proxf1}\\
 \left|\{s: |\tilde{f}(s) +1- \l | <\e_1\} \right| & > \l(1-\e_1). \label{proxf2}
 \end{align} 
Dropping tildas for notational convenience, we now have a smooth function $f$ with finite Fourier support, satisfying \eqref{proxf1}-- \eqref{proxf2}. We also set  $1$-periodic $F$ such that $F'' = f$. 

Next, we fix another small parameter $\e_2$, a compactly embedded subdomain $\O' \ss \O$ with $| \O' | > (1-\e_2) |\O|$, and a localization function $h \in C_0^\infty( \O)$, $0 \leq h \leq 1$ such that $h = 1$ on $\O'$. We denote by $\tilde{h}$ the function obtained from $h$ via truncating the Fourier series of $h(t,\cdot)$, uniformly for each $t \in (0,T)$ far enough so that 
\begin{align}
-\e_2 \leq  \tilde{h} \leq 1+ \e_2 &\\
\left|\{ y\in \O: | \tilde{h}(y) - 1| < \e_2 \} \right| & > (1-\e_2) |\O| \\
\sup_{y \not \in \O} \{ |\tilde{h}(y) |,|D \tilde{h}(y) |,|D^2\tilde{h}(y) | \}& < \e_2.
\end{align}
In addition, since the truncation is performed only with respect to spacial dependence, we also have
\begin{equation}\label{}
\supp(\tilde{h}) \ss \O_T.
\end{equation}
We will drop tildas from $h$ below as well.

For any small scale parameter $\d >0$ we find a frequency vector $\xi = \lan \k_1,...,\k_n \ran$ such that $\xi/\d \in \frac{1}{2\pi} \Z^n$ and
\begin{equation}\label{xid}
| \xi  - \xi_0 | \leq \d \frac{\sqrt{n}}{4\pi}.
\end{equation}
We can now define
\begin{align}
\psi(y) &= \d^2 F((d_1 t + x \cdot \xi)/\d) h(y) \label{}\\
\phi(y) & = \d F'((d_1 t + x \cdot \xi)/\d) h(y)\label{}.
\end{align}
Here $d_1\in \R$ is to be determined later. Finally, we let
\begin{equation}\label{}
V = \th_0 \calD_1(\psi) + \sum_{i=2}^n d_i \calD_i(\phi).
\end{equation}
Directly from the definitions we obtain
\begin{align}
V = [ \lan \th_0 |\xi|^2, 0, \ldots, 0 \ran & + d_1\lan 0, -\th_0 \k_1,\ldots, - \th_0 \k_n \ran  \label{d1}\\
& + \sum_{i=2}^n d_i \lan 0, -\k_i,\ldots, \k_1, \ldots, 0\ran ] f h + O(\d) \label{dn}
\end{align}
Let us notice that
$$
\det \begin{pmatrix}
      -\th_0 \k_1& -\k_2 & \ldots & - \k_n   \\
      -\th_0 \k_2 &  \k_1& \ldots & 0 \\
                \vdots         &    \vdots   &   \ddots &  \vdots\\
      -\th_0 \k_n & 0  &  \ldots & \k_1                   
\end{pmatrix} = - \th_0 \k_1^{n-2} | \xi|^2 \neq 0.
$$
So, one can find an $n$-tuple $(d_1,\ldots,d_n)$ such that the corresponding vectors in \eqref{d1} -- \eqref{dn} add up to $\lan 0, q_0 \ran$. Along with \eqref{xid} this ensures that $V$ has the following form
$$
V = \lan \th_0, q_0 \ran fh + O(\d).
$$
Let us now denote by $\th$ the first component of $V$, or more explicitely,
\begin{equation}\label{th}
\th = \D_x \psi = \th_0 |\xi|^2 f h + \d F'\lan \xi \cdot \n_x h \ran + \d^2 F \D_x h.
\end{equation}
Let us define $u = T[\th]$. According to our construction above $\th(t,x)$ has a finite Fourier support in spacial frequencies uniformly for all $t \in (0,T)$. More procisely, there is $R>0$ such that 
$$
\supp( \widehat{\th(t,\cdot)} ) \ss B_R(\xi_0/\d) \cup B_R(- \xi_0/\d).
$$
Since $\xi_0 \in \S^{n-1}$ is a regular point for the symbol $m$ there is an open neighborhood $W\ss \S^{n-1}$ of $\xi_0$ where $m \in C^1$. It is clear that for $\d$ small enough the radial projection of $\supp( \widehat{\th(t,\cdot)} )$ will land into $W\cup(-W)$ for all $t\in (0,T)$. Since, in addition, $m$ is even and $0$-homogeneous, we can perform the following standard computation
\begin{align}
u(t,x) & = \sum_{k \in \Z^n} e^{i k\cdot x} m(k)  \widehat{\th(t,\cdot)}(k)  \label{}\\
& =  \sum_{\substack{|\d k - \xi_0| < \d R \\ |\d k + \xi_0| < \d R} } e^{i k\cdot x} (m(\d k) - m(\xi_0))  \widehat{\th(t,\cdot)}(k)  + m(\xi_0) \th(t,x) \label{}
\end{align}
and thus in view of \eqref{th}
\begin{equation}\label{}
\|u  - \th_0 m(\xi_0) f h \|_\infty \lesssim \d.
\end{equation}
It is now clear that choosing $\e_1,\e_2>0$ small enough and sending $\d \ra 0$, the matrix-functions $
Z(\d) = \left(\begin{smallmatrix}
      &V    \\
      0  &u  
\end{smallmatrix}\right)
$ satisfy all the properties listed in the lemma.
\end{proof}

In the sequel we will actually use a slightly different variation of \lem{l:osc}. Let us suppose that we have two matrices $A_1$ and $A_2$ such that $L = A_2-A_1 \in \L$, and let $A^* \in (A_1,A_2)$ be another matrix on the interval joining $A_1$ and $A_2$. Then, there exists $\l \in (0,1)$ such that $A^* = \l A_1 + (1-\l)A_2$. So, we have
$A^* + \l L = A_2$ and $A^* - (1-\l) L = A_1$. By applying \lem{l:osc} to $L$ and $\l$,  we find a sequence $Z_k$ with the properties (1) and (2) above and
\begin{itemize}
  \item[(3*)] $\sup_{y \in \O} \dist\{A^*+ Z_k(y), [A_1,A_2] \} <\e$;
  \item[(4*)] $|\{ y \in \O: A^*+ Z_k(y) \in B_\e(A_2) \}| > (1-\l)(1-\e)|\O|$;
  \item[(5*)] $|\{ y \in \O: A^*+ Z_k(y) \in B_\e(A_1) \}| > \l (1-\e)|\O|$;
\end{itemize}
We can see that every element of the shifted  sequence spends nearly all time in the vicinities of the end points of the interval.

One last property we can retrieve from the argument above is the localization of the Fourier support of the sequence near the line $[-\xi_0,\xi_0]$. Let $\bar{\xi} = \xi/|\xi|$. We have
\begin{itemize}
  \item[(6*)] For every open neighborhood of $\xi_0$, $W \ss \S^{n-1}$ one can ensure the inclusion
  $$
     \overline{\supp \widehat{Z_k(t,\cdot)}} \ss W \cup (-W),
   $$
   for all $t \in (0,T)$ and $k\in \N$.
\end{itemize}

\section{Geometric considerations}
To every regular point $\xi \in \S^{n-1}$ we can associate an open neighborhood $\xi \in W \ss \S^{n-1}$ such that $m$ is $C^1$ on a neighborhood of $\bar{W}$, and $\rest{m}{W}$ is an immersion. So, the image $S=m(W)$ is a $C^1$-hypersurface of $\R^n$.  In view of condition (ii) of the theorem, we can find two such hypersurfaces $S_1 = m(W_1)$ and $S_2 = m(W_2)$ so that the interval $I$ joining points $m_1 = m(\xi_1)$ and $m_2 = m(\xi_2)$ intersects $S_1$ and $S_2$ transversally. Let us fix two distinct points $a,q_0 \in I$ which are different from $m_1$ and  $m_2$. By narrowing down the surfaces $S_i$'s around $m_i$'s if necessary we can ensure that there exists an open ball $B_{\d_0}(q_0)$ centered at $q_0$ of radius $\d_0 >0$, not containing $a$, and such that every point $x \in B_{\d_0}(q_0)$ uniquely determines two other points $x_1\in S_1$ and $x_2\in S_2$ which lie on the line joining $a$ and $x$. We can also assume that $W_j$

Let us notice that under small perturbations of the "screens" $S_1$ and $S_2$ the projection points $x_1$ and $x_2$ are still uniquely defined from $x$. Specifically, let $u \in \R^n$ and $\th \in \R$ are so that $|u|, |\th| < \d_1$ for some small $\d_1>0$. Then any point $x \in B_{\d_0}(q_0)$ defines $x_1 \in u + (1-\th) S_1$ and $x_2 \in u + (1-\th)S_2$ is the manner decribed above. Moreover, the line joining $x_1$ and $x_2$ crosses the screens $u + (1-\th) S_j$ transversally at angles bounded away from zero.

Let us introduce into consideration the restricted cone:
$$
\L_W = \left\{ \begin{pmatrix}
      \th &  q  \\
       0 &  \th m(\xi)
\end{pmatrix}:  \th \neq 0,  \xi \in W_1 \cup W_2, q \in \R^n\right\}.
$$
Denote 
$$A_0 = \begin{pmatrix}
      0 & q_0   \\
     0 &  0
\end{pmatrix}.$$
 We now proceed with the following lemma.

\begin{lemma}\label{l:t4} There exists a $\d >0$ such that every matrix $A$ satisfying
\begin{equation}\label{}
|A - A_0 | <\d
\end{equation}
can be represented as a convex combination of four matrices $A = \sum_{j=1}^4 \l_j T_j(A)$, with all $\l_j \in (0,1)$, and $T_j(A) \in K$ of the form
\begin{align*}
T_1(A) &= \begin{pmatrix}
      1 &    x_1 \\
       0 & x_1 
\end{pmatrix}
&T_3(A) &= \begin{pmatrix}
      -1 &    -y_1 \\
       0 & y_1 
\end{pmatrix}\\
T_2(A) &= \begin{pmatrix}
      1 &    x_2 \\
       0 & x_2 
\end{pmatrix}
&T_4(A) &= \begin{pmatrix}
      -1 &    -y_2 \\
       0 & y_2 
\end{pmatrix}
\end{align*}
Moreover, $T_j(A) - A \in \L_W$ for all $j$, and the maps $A \ra x_i(A)$, $A \ra y_i(A)$ are open as maps from $\R^{2n+1}$ into $\R^n$.  
\end{lemma}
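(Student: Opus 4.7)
My plan is to reduce the four-point convex representation of $A$ to the single transversal projection problem built in the paragraph preceding the lemma. First I parametrize a matrix near $A_0$ by a scalar $\th_A \in \R$ and two vectors $q_A, u_A \in \R^n$, so that $A = \left(\begin{smallmatrix} \th_A & q_A \\ 0 & u_A \end{smallmatrix}\right)$, and split $A$ by its $\th$-content as $A = \m^+ A^+ + \m^- A^-$ with $\m^\pm = (1\pm\th_A)/2$. Requiring $A^\pm \in K$ (so that the upper-right equals $(\pm 1)$ times the lower-right) forces
\[
A^+ = \begin{pmatrix} 1 & p^+ \\ 0 & p^+ \end{pmatrix}, \qquad A^- = \begin{pmatrix} -1 & -p^- \\ 0 & p^- \end{pmatrix},
\]
where $p^+ = (q_A + u_A)/(1+\th_A)$ and $p^- = (u_A - q_A)/(1-\th_A)$. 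At $A = A_0$ this yields $p^+ = q_0$, $p^- = -q_0$, and the weights $\m^\pm$ remain bounded away from $0$ and $1$ throughout a small neighborhood of $A_0$.

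Next I look for representations $A^+ = \a T_1 + (1-\a) T_2$ and $A^- = \b T_3 + (1-\b) T_4$ with $T_j$ of the stated form. The constraint $T_j - A \in \L_W$, after dividing out the nonzero upper-left entry $\pm 1 - \th_A$, amounts to
\[
x_i = u_A + (1-\th_A)\, s_i,\qquad y_i = u_A - (1+\th_A)\, s_i',\qquad s_i,\, s_i' \in S_1 \cup S_2.
\]
Substituting these into the two convex-combination identities and simplifying, both identities collapse to one and the same scalar projection equation
\[
\a\, s_1 + (1-\a)\, s_2 = r(A),\qquad r(A) := \frac{q_A - \th_A u_A}{1-\th_A^2},
\]
with $s_1 \in S_1$ and $s_2 \in S_2$, and an identical equation for $s_1',s_2'$ with $\b$. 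This coincidence---which would fail under any other normalization of the top-left entries of the $T_j$---is the key conceptual step and allows a single set of projection data to serve both halves of the T4.

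Since $r(A_0) = q_0 \in B_{\d_0}(q_0)$, the quantity $r(A)$ stays inside $B_{\d_0}(q_0)$ for $A$ near $A_0$, and the transversal projection through $a$ on the perturbed screens constructed in the preceding paragraph delivers smooth, unique $s_1(A), s_2(A)$ and $\a(A) \in (0,1)$. Taking $s_i' = s_i$ and $\b = \a$, the weights $\l_1 = \m^+ \a$, $\l_2 = \m^+(1-\a)$, $\l_3 = \m^- \a$, $\l_4 = \m^-(1-\a)$ all lie in $(0,1)$ and sum to one, each $T_j$ is in $K$ by construction, and each $T_j - A$ is in $\L_W$ because $s_i = m(\xi_i')$ for some $\xi_i'$ in the immersion neighborhood $W_1$ or $W_2$.

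For the openness of $A \mapsto x_i(A), y_i(A)$ I would differentiate, say, $x_1(A) = u_A + (1-\th_A) s_1(A)$ at $A_0$. Since $\p r/\p u_A\big|_{A_0} = -\th_A (1-\th_A^2)^{-1} I \big|_{\th_A = 0} = 0$, the projection datum $s_1$ has zero derivative in the $u_A$-direction at $A_0$, so $\p x_1/\p u_A\big|_{A_0} = I$; the same argument covers $x_2, y_1, y_2$. This full-rank block gives a surjective differential, whence each map is open on a neighborhood of $A_0$. The principal obstacle I foresee is exactly the initial guess of the correct $\pm 1$ normalization on the two halves of the T4 that forces both convex-combination problems to collapse to the same projection equation on $I$; once this collapse is observed, the rest of the lemma is essentially bookkeeping on top of the transversality picture already built.
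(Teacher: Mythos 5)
Your proposal is correct and reaches the same conclusion, but it organizes the projection step differently from the paper, and the difference is worth noting. The paper's proof splits $A = \tfrac{1+\th}{2}X + \tfrac{1-\th}{2}Y$ exactly as you do, but then projects the two points $x = (u+q)/(1+\th)$ and $-y = (q-u)/(1-\th)$ separately through $a$ onto the \emph{perturbed} screens $u + (1-\th)S_j$ and $-u + (1+\th)S_j$ respectively; this produces, in general, two distinct weights $\a \neq \b$ for the two halves of the $T4$, and uses the robustness-under-perturbation discussion from the paragraph before the lemma. Your observation that both convex-combination identities collapse, after dividing out the affine scalings, to the \emph{same} equation $\a s_1 + (1-\a)s_2 = r(A)$ with $r(A) = (q - \th u)/(1-\th^2)$ on the \emph{unperturbed} screens $S_1, S_2$ is a genuine streamlining: it replaces two independent transversal projection problems by one, and it allows you to set $s_j' = s_j$ and $\b = \a$, so that the perturbed-screen robustness argument is not needed at all in this lemma. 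The bookkeeping you give (computing $p^\pm$, translating the $\L_W$-membership constraint into $x_i = u + (1-\th)s_i$ and $y_i = u - (1+\th)s_i$, and checking $p^- = u - (1+\th)r(A)$) is correct throughout. One small inconsistency in your write-up: you attribute the existence and smoothness of $s_1(A), s_2(A)$ to the "perturbed screens" machinery, when in fact your own reduction needs only the unperturbed projection of $B_{\d_0}(q_0)$ through $a$ onto $S_1, S_2$; this is a presentation issue, not a gap.

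For openness, your argument (differentiate $x_1 = u + (1-\th)s_1(r(A))$ and observe $\p r / \p u = -\th(1-\th^2)^{-1}I$ vanishes at $\th = 0$, hence $\p x_1/\p u|_{A_0} = I$) is a valid linearization of what the paper does geometrically by varying $q$ tangentially along the screen and then $u$ transversally. Strictly speaking your computation gives surjectivity of the differential only at $A_0$, so you should add the routine remark that the differential remains surjective on a neighborhood of $A_0$ by continuity, and shrink $\d$ accordingly; the paper's argument is formulated to work on all of $B_\d(A_0)$ without this extra step. With that caveat your proof is complete.
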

\begin{proof}
Let $A =\left( \begin{smallmatrix}
     \th & q   \\
     0 & u 
\end{smallmatrix}\right)$. One can represent $A$ as
$$
A = \frac{1+\th}{2} X + \frac{1-\th}{2} Y,
$$
where
$$
X = \begin{pmatrix}
     1 & x   \\
     0 & x 
\end{pmatrix}, \quad
Y = \begin{pmatrix}
     -1 & -y   \\
     0 & y 
\end{pmatrix}
$$
and
$$
x = \frac{u+q}{1+\th}, \quad y = \frac{u-q}{1-\th}.
$$
Notice that if $\d>0$ is small enough, then $x \in B_{\d_0}(q_0)$ and $y \in B_{\d_0}(-q_0)$.  According to the scheme set forth before the lemma we can find four uniquely defined points $x_j\in u + (1- \th)S_j$ and (by the symmetry) $y_j \in u - (1+\th) S_j$. Let us verify that these points, and the corresponding $T_j$'s satisfy the conclusions of the lemma. First, the facts $A \in \conv\{T_j(A)\}_{j=1}^4$ and $T_j(A)\in K$ follow directly from the construction. Second, there exists $\xi_1 \in W_1$ such that $x_1 = u + (1-\th) m(\xi_1)$. Then 
$$
T_1(A) - A = \begin{pmatrix}
      1 - \th &   u + (1-\th) m(\xi_1) - q \\
      0 & (1-\th) m(\xi_1) 
\end{pmatrix} \in \L_W.
$$
The other inclusions $T_j(A) - A \in \L_W$, $j=2,3,4$, are verified similarly.

Let us prove that the map $A \ra x_1(A)$ is open, the other three follow in the same fashion. To this end, let us fix an
$$
A' = \begin{pmatrix}
     \th' &  q'  \\
       0 & u' 
\end{pmatrix} \in B_\d(A).
$$
Then $x' = \frac{u' + q'}{1+\th'} \in B_{\d_0}(q_0)$, and $x_1(A') \in u' + (1-\th')S_1$. We will keep $\th'$ fixed at all time. Varying $q'$ in its small neighborhood it is clear that the point $x_1(A')$ describes a relatively open subset of the screen $u' + (1-\th')S_1$. Denote it $Q_1(u')$. Next observe that there is a small $\d_3>0$ such that for all $u''$ with $|u''-u'|<\d_3$ the distance of $x(A'')$ to the boundary of $Q_1(u'')$ remains bounded away from zero. It remains to vary $u''$ in the direction transversal to the surface $(1-\th')S_1$ to obtain an open set containing $x_1(A')$.
\end{proof}

Let us fix $\d >0$ as in the previous lemma and define the sets
\begin{align}
\K &= \cup_{j=1}^4 T_j(B_\d(A_0)) \ss K  \label{}\\
\U &= \cup_{j=1}^4 \conv^{\L_W} \left\{ B_\d(A_0), T_j(B_\d(A_0))  \right\} \backslash \K.
\end{align}
Here we use the $\L_W$-convex hull of two sets defined by
$$
\conv^{\L_W} \left\{ E_1, E_2 \right\} = \left\{ \l A_1 + (1-\l)A_2: A_j \in E_j, A_1 - A_2 \in \L_W \right\}
$$

We will now use the openness of the maps defined in \lem{l:open} to show that $\U$ is an open subset of $\R^{2n+1}$.
\begin{lemma}\label{l:open} The set $\U$ is open in $\R^{2n+1}$.
\end{lemma}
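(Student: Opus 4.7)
The plan is to verify openness pointwise: fix $A^* \in \U$ and produce an open neighborhood of $A^*$ contained in $\U$. Since $\U$ is a finite union indexed by $j\in\{1,\ldots,4\}$, I will handle one piece, $\conv^{\L_W}\{B_\d(A_0), T_1(B_\d(A_0))\}\setminus\K$, the other three being completely analogous. Write $A^* = (1-s^*)A_1^* + s^* T_1(A_2^*)$ with $A_1^*, A_2^* \in B_\d(A_0)$, $s^*\in [0,1]$, and $T_1(A_2^*)-A_1^*\in \L_W$. Two degenerate cases can be disposed of immediately: since $T_1(B_\d(A_0))\subset\K$ and $A^*\notin\K$, the endpoint $s^*=1$ is impossible; and if $s^*=0$, then $A^*=A_1^*\in B_\d(A_0)\subset\U$, where the inclusion uses $T_j(A)-A\in\L_W$ from \lem{l:t4} together with the symmetry $\L_W=-\L_W$ coming from evenness of $m$.

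For the main case $0<s^*<1$, the idea is to freeze the ``wave-direction'' parameters $\th_1^*$ and $\xi^*\in W_1\cup W_2$ (the latter realizing the membership $T_1(A_2^*)-A_1^*\in \L_W$), and to solve algebraically for the remaining unknowns in terms of $A$. For $A=\left(\begin{smallmatrix}\th & q \\ 0 & u\end{smallmatrix}\right)$ near $A^*$ I would set
\[
s(A)=\frac{\th-\th_1^*}{1-\th_1^*},\qquad u_1(A)=u-(\th-\th_1^*)\,m(\xi^*),
\]
\[
q_1(A)=u_1(A)+\frac{q-u}{1-s(A)},\qquad x(A)=u+(1-s(A))(1-\th_1^*)\,m(\xi^*),
\]
and then form the matrices $A_1(A)=\left(\begin{smallmatrix}\th_1^* & q_1(A) \\ 0 & u_1(A)\end{smallmatrix}\right)$ and $C(A)=\left(\begin{smallmatrix}1 & x(A) \\ 0 & x(A)\end{smallmatrix}\right)$. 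A short direct computation yields the two identities $A=(1-s(A))A_1(A)+s(A)C(A)$ and $C(A)-A_1(A)\in\L_W$ with wave vector $\xi^*$.

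Four continuity checks then close the argument as $A\to A^*$: (i) $A_1(A)\to A_1^*\in B_\d(A_0)$; (ii) $s(A)\to s^*\in(0,1)$; (iii) $C(A)\in T_1(B_\d(A_0))$; and (iv) $A\notin\K$. Items (i) and (ii) use openness of $B_\d(A_0)$ and of $(0,1)$; item (iv) is automatic because $\th(A)$ remains strictly in $(-1,1)$ near $A^*$ whereas matrices in $\K$ have $\th$-entries $\pm 1$. The essential item is (iii): one must produce $A_2(A)\in B_\d(A_0)$ with $x_1(A_2(A))=x(A)$. This is exactly where the openness of the map $A_2\mapsto x_1(A_2)$ from \lem{l:t4} enters: the image $x_1(B_\d(A_0))$ is an open subset of $\R^n$ containing $x^*=x_1(A_2^*)$, so by continuity $x(A)$ lies in it for all $A$ sufficiently close to $A^*$.

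The main obstacle, conceptually, is that $T_1(B_\d(A_0))$ sits inside $\R^{2n+1}$ as a merely $n$-dimensional slice, so a generic perturbation of $A$ cannot be absorbed by perturbing $A_2$ alone. Freezing the codimension-$n$ parameters $(\th_1,\xi)$ is precisely what collapses the perturbation onto the $n$-dimensional target in which $x_1$ is an open map; without the openness statement from \lem{l:t4} the decomposition would only sweep out lower-dimensional surfaces in $\R^{2n+1}$ rather than open sets, and this lemma would fail.
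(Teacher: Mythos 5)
Your proof is correct and rests on exactly the same key ingredient as the paper's: the openness of the maps $A''\mapsto x_j(A'')$ established in \lem{l:t4}, which is what lets perturbations of $A$ propagate into the $n$-dimensional screen $T_j(B_\d(A_0))$. The paper implements this by perturbing the given decomposition $A = tA' + (1-t)T_1(A'')$ in the parameters $(\d t,\d q,\d x)\in\R^{2n+1}$ and invoking the inverse function theorem after a Jacobian computation, whereas you instead write down explicit closed-form inverse formulas $s(A),A_1(A),C(A)$ (freezing $\th_1^*$ and $\xi^*$) and then only need continuity. Both decompositions are the same up to relabeling $s=1-t$; your route is somewhat more transparent because it dispenses with the IFT (and incidentally sidesteps a small slip in the paper's Jacobian matrix, whose determinant should read $(\th-1)t^n$ rather than $(\th-1)(1-t)^n t^n$, though this does not affect the paper's conclusion). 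One minor remark: in checking item (iii) it is worth saying explicitly that the relation $x(A)-u_1(A)=(1-\th_1^*)m(\xi^*)$ continues to hold identically in $A$ because $\th_1^*$ and $\xi^*$ are frozen, which is what guarantees $C(A)-A_1(A)\in\L_W$ for all nearby $A$ and not just at $A^*$; you verify this algebraically, so the argument is complete.
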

\begin{proof}
Let $A \in \U$, and let us exclude the trivial case $A\in B_\d(A_0)$. So, there is $t\in(0,1)$, $A',A''\in B_\d(A_0)$ such that $A = t A' + (1-t)T_j(A'')$, and $T_j(A'') - A' \in \L_W$. Let us assume that $j=1$ for definiteness. Let
$$
A' = \begin{pmatrix}
      \th & q   \\
      0 & u 
\end{pmatrix}, \quad 
T_1(A'') = \begin{pmatrix}
      1&  x  \\
      0 & x 
\end{pmatrix}.
$$
Since the map $A'' \ra x(A'')$ is open for $\d x$ small enough we have
$T_1(A'')+ \d T \in T_1(B_\d(A_0))$, where $\d T =\left( \begin{smallmatrix}
     0 & \d x   \\
     0 &  \d x
\end{smallmatrix}\right)$. Then for $\d A' = \left(\begin{smallmatrix}
     0 & \d q   \\
      0&  \d x
\end{smallmatrix}\right)$ we still have $T_1(A'') + \d T - A' - \d A' \in \L$ and $A' + \d A' \in B_\d(A_0)$. Thus, for all small $\d x, \d q, \d t$ we have
$$
A+\d A = (t+ \d t)(A' + \d A') + (1- t -\d t)(T_1(A'') + \d T) \in \U.
$$
We have from the above 
$$
\d A = \d t A' + t \d A' + \d t \d A' + (1-t) \d T - \d t T_1(A'') - \d t \d T.
$$
Let us consider $\d A$ as a map from $\R^{2n+1}$ into itself. The Jacobian of this map at the origin is
$$
J_{2n+1} = \det \begin{pmatrix}
      \th-1 & q-x& u-x   \\
      0 &  (1-t)I_n & I_n\\
      0& 0& t I_n
\end{pmatrix} = (\th-1)(1-t)^nt^n \neq 0.
$$
Thus, by the inverse function theorem, $\d A$ fills an open neighborhood of $0$.
\end{proof}

\section{Functional considerations}

We now use \lem{l:osc} to construct solutions within the set $\U$ oscillating around a point $A$ and having the amplitude of oscillation comparable to the distance of $A$ to the target constraint set  $\K$.

\begin{lemma} \label{l:func} 
For any $\Omega \ss \O_T$, $\e >0$, and $A \in \U$ there exists a sequence $\{Z_k\}_{k=1}^\infty \ss C_0^\infty(\O_T)$ satisfying the relaxed problem \eqref{rel1} -- \eqref{rel3} and such that 
\begin{enumerate}
  \item $Z_k \ra 0$ weakly$^*$;
  \item $\sup_{y \not \in \O} |Z_k(y)| < \e$, for all $k \in \N$;
  \item $A + Z_k(y) \in \U$, for all $y \in \O$ and $k \in \N$;
  \item $\left| \{ y \in \O:  | Z_k(y)| \geq c_0 \dist\{A,\K\} \} \right| > c_1|\O|$,
  \item $\overline{ \supp{ \widehat{Z_k(t,\cdot)}  }} \ss W_1 \cup (-W_1) \cup W_2 \cup (-W_2)$, for all $k\in \N$, and all $t\in(0,T)$,
\end{enumerate}
where $c_0,c_1>0$ are some absolute constants.
\end{lemma}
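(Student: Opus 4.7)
The plan is to realize $Z_k$ as the output of the modified version of \lem{l:osc}, applied to a short segment through $A$ in an $\L_W$-allowed direction whose endpoints sit safely inside the open set $\U$ and whose length is comparable to $\dist(A,\K)$.

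First I would unfold the definition of $\U$ to obtain indices $j\in\{1,\dots,4\}$, $B_1\in B_\d(A_0)\ss\U$, $B_2\in T_j(B_\d(A_0))\ss\K$, and a weight $t\in(0,1]$ (strictly positive because $A\notin\K$) such that $A=tB_1+(1-t)B_2$ and $L_0:=B_2-B_1\in\L_W$. Since $B_2$ is the only point of $[B_1,B_2]$ that belongs to $\K$, the half-open segment $[B_1,B_2)$ is contained in $\U$, and $\dist(A,\K)\leq|A-B_2|=t|L_0|$. Next, for a small fixed $\sigma\in(0,1)$ I would set $A_i^\sigma=(1-\sigma)A+\sigma B_i$ for $i=1,2$. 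A direct check gives $A_2^\sigma-A_1^\sigma=\sigma L_0\in\L_W$ (the cone is closed under positive scaling), $A=tA_1^\sigma+(1-t)A_2^\sigma$, and both $A_i^\sigma\in[B_1,B_2)\ss\U$; openness of $\U$ (\lem{l:open}) then supplies $\rho>0$ with $B_\rho(A_i^\sigma)\ss\U$.

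I would then invoke the modified wave-cone lemma, with its properties (3*)--(6*), using $A^*=A$, endpoints $A_1^\sigma,A_2^\sigma$ (so $L=\sigma L_0$, weight $\l=t$), accuracy $\e<\rho$, and Fourier-localization target $W=W_1\cup W_2$. The resulting sequence $\{Z_k\}\ss C_0^\infty(\O_T)$ is weak-$*$ null, below $\e$ outside $\O$, lies within $\e$ of $[A_1^\sigma,A_2^\sigma]$ throughout $\O$, concentrates in $\e$-neighborhoods of $A_1^\sigma$ and $A_2^\sigma$ on measures at least $t(1-\e)|\O|$ and $(1-t)(1-\e)|\O|$ respectively, and has spatial Fourier support contained in $\pm(W_1\cup W_2)$. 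Items (1), (2), (3), and (5) of the lemma then follow at once: convergence and smallness propagate, $\e<\rho$ combined with (3*) forces $A+Z_k\in\U$ pointwise, and (6*) gives (5). For (4), on the near-$A_2^\sigma$ set one has $|Z_k|\geq\sigma t|L_0|-\e\geq\sigma\dist(A,\K)-\e$, so choosing $\e$ as a small fraction of $\sigma\dist(A,\K)$ yields $|Z_k|\geq(\sigma/2)\dist(A,\K)$ on a set of measure at least $(1-t)(1-\e)|\O|$.

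The hard part will be making $c_0$ and $c_1$ \emph{absolute} constants independent of $A$. The measure $(1-t)$ of the near-$A_2^\sigma$ set degenerates precisely when $A$ approaches $B_\d(A_0)$, i.e., when $A$ is far from $\K$. In that regime $\dist(A,\K)$ is itself bounded above by a universal constant depending only on $\d$ and the matrices $A_0,T_j(A_0)$, so one can switch to a different decomposition: the $T4$-representation from \lem{l:t4} gives $A=\sum_{j=1}^4\l_jT_j(A)$ with every $\l_j$ bounded away from $0$ and $1$ uniformly for $A$ near $A_0$, which provides a symmetric segment through $A$ in some direction $T_j(A)-A\in\L_W$ both of whose half-lengths are of unit order. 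Running the same oscillation construction in this direction produces absolute constants $c_0,c_1$.
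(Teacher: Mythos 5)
Your handling of the case where $A$ sits at a generic point of $\U$ with $t$ bounded away from $1$ is essentially correct and matches the paper's sub-case $A\notin B_\d(A_0)$, $t<1/2$: unfold the definition of $\U$, shrink the segment into $\U$ by openness, apply the modified oscillation lemma, and read off a displacement comparable to $\dist(A,\K)$ on a measure comparable to $1-t$. You also correctly identify that the whole difficulty lies in making $c_0,c_1$ absolute, and that the problem degenerates as $t\to 1$, i.e.\ as $A$ approaches $B_\d(A_0)$.

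However, your proposed fix for that regime does not work, and this is the heart of the lemma. When $A\in B_\d(A_0)$, there is no ``symmetric segment through $A$ in direction $T_j(A)-A$ both of whose half-lengths are of unit order'' inside $\U$. Going from $A$ towards $T_j(A)$ is fine, but going the opposite way from $A$ by a unit-order distance exits $B_\d(A_0)$ (whose radius $\d$ is small) and lands in a region that is not in $\U$ in general. The set $\U$ is a union of thin $\L_W$-convex tubes emanating from $B_\d(A_0)$ towards the screens $T_j(B_\d(A_0))$; the far side of $A$ away from any screen is simply not in $\U$. Consequently any single admissible oscillation segment $[A_1,A_2]$ through $A$ with $A_2$ near some $T_j$ must place $A$ very close to the $A_1$ end, so the portion of measure landing near $A_2$ (where the displacement is of order $\dist(A,\K)$) is of order $\eta\ll 1$, not of order $1$. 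A single oscillation therefore cannot yield absolute constants in this case; this is precisely why the $T4$-configuration is needed at all rather than a $T2$ one.

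What the paper actually does is an \emph{iterated} staircase: from $A=A_4$ it oscillates to reach $B_\e(T_4)$ on measure $\sim\eta\l_4$ and $B_\e(A_3)$ on measure $\sim 1-\eta$; then from $A_3$ it oscillates again to reach $B_\e(T_3)$ on measure $\sim\eta\l_3(1-\eta)$ and $B_\e(A_2)$ on the rest; and so on, $N$ times with $N$ chosen so that $(1-\eta)^{N-4}<1/2$. Summing the disjoint pieces gives a geometric series bounded below by $\tfrac14|\O|$, on each of which the telescoping sum $\calZ_n$ has magnitude within $N\e$ of $|A_0-T_{p}|\gtrsim\dist(A,\K)$. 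This iteration, with the $(1-\eta)^{N-4}<1/2$ and $(1-\eta)^3>1/2$ bookkeeping, is the missing idea in your sketch. The same iteration is then also needed, after one preparatory oscillation, to handle $A\notin B_\d(A_0)$ with $t\ge1/2$, a sub-case your outline does not address.
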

\begin{proof}
Let us assume first that $A \in B_\d(A_0)$. Let $T_j(A)$ be as in 
\lem{l:t4}. We have $A = \sum_{j=1}^4 \l_j T_j(A)$ for some $\l_j\in (0,1)$ and $\sum_{j=1}^4 \l_j = 1$. Clearly, moving $T_j$'s slightly closer to $A$, i.e. letting $T^s_j = (1-s)T_j(A) + s A$ for some small $s>0$ we obtain $T^s_j \in \U$, and still $A = \sum_{j=1}^4 \l_j T^s_j$. Also notice that $| A - T^s_j | =  (1-s) |A - T_j(A)| \geq (1-s) \dist\{ A, \K\}$, for all $j=1,2,3,4$. Now let us move the segments $[A,T^s_j]$ around as follows. Let us define for $i=1,2,3,4$ 
$$
A_i = A + \eta \sum_{j=1}^i \l_j (T^s_j - A).
$$
Notice that $A_4 = A$, so the sequence closes and we have four distinct points $A_1, A_2, A_3, A_4$. If $\eta>0$ is chosen small enough then clearly all four belong to $B_\d(A_0)$. Let $T_i = T^s_i + A_{i-1} - A$. We now obtained the classical $T4$-configuration, where $T_i - A_{i-1} \in \L_W$, and 
$$
A_i = (1-\eta \l_i)A_{i-1} + \eta \l_i T_i,
$$
for all $i=1,2,3,4$ (here, with a little abuse of notation, we use the cyclic notation where $A_0 = A$). Again, if $\eta$ is small enough, $T_i$'s become perturbations of $T^s_i$'s and hence 
\begin{equation}\label{away2}
|T_i - A| \geq \frac{1}{2} \dist\{ A, \K\}.
\end{equation}
One last requirement on $\eta$ is 
\begin{equation}\label{eta}
(1-\eta)^3 > 1/2.
\end{equation}

We construct the sequence $Z_k$ based on an iterative procedure in the spirit of \cite{}. There will be only finitely many steps in the process. The number of steps we need, denoted $N$, is divisible by $4$ and is such that 
\begin{equation}\label{N}
(1-\eta)^{N-4} < 1/2.
\end{equation}
So, let us fix such an $N$ from now on. Our next parameter $\e>0$ will be determined later and will depend on various other parameters and conditions. So, we will keep it arbitrary at the moment. 

Our starting point is $A_4=A$ itself. We apply the variant of \lem{l:osc}  stated in the remark following its proof we find a sequence $Z_{k_1} \ra 0$ weakly$^*$, with condition (5) ensured by (6*), and  such that 
\begin{equation}\label{}
\sup_{y\not \in \O}|Z_{k_1}(y)| < \e,
\end{equation}
while the sets
\begin{align}
\O_{k_1}^A &= \{ y \in \O: A_4 + Z_{k_1} \in B_\e(A_3) \}   \label{}\\
\O_{k_1}^T &= \{ y \in \O: A_4 + Z_{k_1} \in B_\e(T_4) \}      \label{}
\end{align}
have measures satisfying
\begin{align}
| \O_{k_1}^A| &> (1-\eta \l_4)(1-\e) |\O| > (1-\eta )(1-\e) |\O| \label{}\\
| \O_{k_1}^T| &> \eta \l_4 (1-\e) |\O|.   \label{}
\end{align}
For each $k_1\in \N$ we constract a new sequence $Z_{k_1k_2}$ based on the set $\O_{k_1}^A$ and oscillating around $A_3$, and with Fourier supports as in (5). We find
\begin{equation}\label{}
\sup_{y\not \in \O_{k_1}^A}|Z_{k_1k_2}(y)| < \e,
\end{equation}
and the sets
\begin{align}
\O_{k_1k_2}^A &= \{ y \in \O_{k_1}^A: A_3 + Z_{k_1k_2} \in B_\e(A_2) \}   \label{}\\
\O_{k_1k_2}^T &= \{ y \in \O_{k_1}^A: A_3 + Z_{k_1k_2} \in B_\e(T_3) \}      \label{}
\end{align}
have measures satisfying
\begin{align}
| \O_{k_1k_2}^A| &>  (1-\eta )^2(1-\e)^2 |\O| \label{}\\
| \O_{k_1k_2}^T| &> \eta \l_3 (1-\e)^2(1-\eta) |\O|.   \label{}
\end{align}
Continuing in the same fashion $N$ times we obtain a collection of $N$  weakly$^*$-null sequences $Z_{k_1}, Z_{k_1k_2},\ldots Z_{k_1\ldots k_N}$ such that 
\begin{equation}\label{}
\sup_{y \not \in \O^A_{k_1\ldots k_{i-1}}} |Z_{k_1\ldots k_i}| <\e,
\end{equation}
and letting
\begin{align}
\O_{k_1\ldots k_i}^A &= \{ y \in \O^A_{k_1\ldots k_{i-1}}: A_{5-i} + Z_{k_1\ldots k_i} \in B_\e(A_{4-i}) \}   \label{}\\
\O_{k_1\ldots k_i}^T &= \{ y \in \O_{k_1\ldots k_{i-1}}^A: A_{5-i} + Z_{k_1\ldots k_i} \in B_\e(T_{5-i}) \}      \label{}
\end{align}
we have the estimates
\begin{align}
| \O_{k_1\ldots k_i}^A| &>  (1-\eta )^i(1-\e)^i |\O| \label{}\\
| \O_{k_1\ldots k_i}^T| &> \eta \l_{5-i} (1-\e)^{i}(1-\eta)^{i-1} |\O|.  \label{omegaT}
\end{align}
Here again we are using the cyclic notation for $A$, $\l$, and $T$ modulo $4$. 
Let us now select diagonal weakly$^*$-null subsequences $Z_{k_1(n)}, \ldots, Z_{k_1(n)\ldots k_N(n)}$ and denote
\begin{equation}\label{}
\calZ_n = Z_{k_1(n)} +  \ldots + Z_{k_1(n)\ldots k_N(n)}.
\end{equation}
We claim that $\calZ_n$ satisfies all the properties stated in the lemma. Condition (1) is immediate, and we have $\sup_{y\not\in \O} |\calZ_n(y)| < \e N$. With the appropriate choice of $\e$ we can ensure the desired smallness condition (2). The inclusion (3) also verifies directly. We have
\begin{align*}
A_4 &+ Z_{k_1}  \ss B_\e([A_3,T_4]),\\
\rest{(A_4& + Z_{k_1}+Z_{k_1k_2})}{\O^A_{k_1}} \ss B_\e(A_3)+Z_{k_1k_2} \ss B_{2\e}([A_2,T_3]) \\
\rest{(A_4 &+ Z_{k_1}+Z_{k_1k_2})}{\O\backslash \O^A_{k_1}} \ss B_{2\e}([A_3,T_4]),
\end{align*}
continuing in the same manner we obtain
\begin{equation}\label{embedZn}
A_4 + \calZ_n \ss B_{N\e}( \cup_{i=0}^3[A_i,T_{i+1}] ),
\end{equation}
which is a subset of $\U$ provided $\e$ is small enough.

Finally, condition (4) can be checked as follows. Let
\begin{equation}\label{}
\O^T(n) = \bigcup_{i=1}^N \O^T_{k_1(n)\ldots k_i(n)}.
\end{equation}
 Since all $\O^T$'s in the union are disjoint we obtain with the help of \eqref{omegaT}
\begin{equation}\label{}
|\O^T(n)| > |\O| \eta (1-\e) \sum_{i=0}^{N-1} \l_{4-i} ((1-\eta)(1-\e))^i.
\end{equation}
In view of \eqref{eta} and \eqref{N} we further obtain
\begin{multline*}\label{}
|\O^T(n)| > |\O| \eta (1-\e)\frac{1/2}{1-(1-\eta)(1-\e)}( \l_4  + \l_3 (1-\eta)(1-\e) \\ + \l_2 (1-\eta)^2(1-\e)^2 + \l_1 (1-\eta)^3(1-\e)^3 ) > \frac{1}{2}|\O|(1-\e) \frac{1}{1-\e+ \frac{\e}{\eta}}>\frac{1}{4}|\O|, 
\end{multline*}
provided $\e$ is chosen small enough. Yet if $y\in \O^T_{k_1\ldots k_i}\ss \O^A_{k_1\ldots k_{i-1}}$, then by construction
$$
\left| \sum_{j=i+1}^N Z_{k_1\ldots k_j}(y) \right| < \e (N-i).
$$
Moreover, for some $l(i),p(i)\in \{1,2,3,4\}$ we have
\begin{align*}
A_{l(i)}+ Z_{k_1\ldots k_i}(y) & \in B_\e(T_{p(i)}) \\
A_{l(i)+1}+ Z_{k_1\ldots k_{i-1}}(y) & \in B_\e(A_{l(i)}) \\
&\vdots \\
A_3 + Z_{k_1k_2}(y) &\in B_\e(A_2) \\
A_4 + Z_{k_1}(y)  &\in B_\e(A_3).  
\end{align*}
Thus,
\begin{multline*}
| \calZ_n + A_0 - T_{p(i)} | = |Z_{k_1} + A_0 - A_3 + A_3 + Z_{k_1k_2} - A_2+\ldots \\ \ldots +A_{l(i) + 1} + Z_{k_1\ldots k_{i-1}} - A_{l(i)}+A_{l(i)} +Z_{k_1\ldots k_{i}} - T_{p(i)} | \leq N \e,
\end{multline*}
and we have $|A_0 - T_{p(i)} | > \frac{1}{2} \dist\{A,\K\}$. In summary, condition (4) is fullfilled on the set $\O^T(n)$, for every $n\in \N$.

To finish the proof we now consider the case when $A \not \in B_\d(A_0)$. Since $A \in \U$ we have $A = t A' + (1-t)T_j(A'')$ for some $t\in (0,1)$, $A',A''\in B_\d(A_0)$, $j\in \{1,2,3,4\}$, and $T_j(A'') - A' \in \L$. If $t<1/2$ then we simply apply \lem{l:osc} to find a sequence $Z_k$ with 
$$
| \{ y\in \O: A+ Z_k \in B_\e(T_j(A'')) \} | > 1/2 |\O| (1-\e).
$$
Thus, with $\e$ small enough, for every point $y$ in the set above we have
$|Z_k(y)| > 1/2|T_j(A'') - A| \geq 1/2 \dist\{A,\K\}$. 
Let us assume now $t\geq 1/2$. We consider the $T4$-configuration as above associated with $A'$, but we start the construction of the sequence with $Z_{k_0}$ oscillating around $A$:
$$
|\O^A_{k_0}| = |\{ y \in \O: A+ Z_{k_0}(y) \in B_\e(A')\}| > t (1-\e) |\O|.
$$
We continue with the construction above based on the set  $\O^A_{k_0}$. We then arrive at the sequence $\calZ_n$ such that 
$$
| \{y \in \O: |\calZ_n(y)| > c_0 \min_{p}|A'-T_p| \} |> c_1 |\O|.
$$
It remains to notice that 
\begin{equation}\label{away}
|A' - T_j(A'')| \geq 1-\d,
\end{equation}
for any $A',A'' \in B_\d(A_0)$ and $j=1,2,3,4$. Thus, (4) is satisfied with some other $c_0,c_1>0$.
Finally, condition (5) is satisfied on each stem of the construction.
\end{proof}

\begin{remark}\label{contT4} From the argument above we can extract a more robust inclusion than that given by condition (3). 
We see that the $T4$-configuration constructed above from a point $A$, let's denote it $T4(A)$, is compactly embedded into $\U$ (in the case $A\not \in B_\d(A_0)$ this construction will contain an extra arm, but we still denote it $T4(A)$).
Let $\e_1 = \e_1(A)>0$ be such that $B_{\e_1}(T4(A)) \ss \U$. By our construction, $A+ Z_k(y) \in B_{\e_1/2}(T4(A))$ for all $y\in \O$ if $\e$ is chosen small enough. So, we additionally obtain a freedom to perturb $A$ in the above inclusion by a quantity indepent of $\e$. Specifically, if $A'\in B_{\e_1/2}(A)$, then $A'+ Z_k(y) \in B_{\e_1}(T4(A))\ss \U$.
 
Let us further notice that the value of $\e_1(A)$ can be chosen uniformly as $A$ varies in a compact subset $\calC \ss \U$. Indeed, the union $T4(\calC) = \cup_{A\in \calC} T4(A)$ can be constructed compactly embedded into $\U$. Thus, there exists $\e_1 = \e_1(\calC) >0$, such that $B_{\e_1}(T4(\calC)) \ss \U$ compactly.
\end{remark}

\section{Conclusion of the proof}

We now move to the final phase of the proof. This part of the argument is very similar to that of \cite{spanish,ds1}, although details will require some expansion due to the fact that we do not assume boundedness of $T$. To avoid this difficulty we use the extra control on the Fourier side as stipulated in condition (5) of \lem{l:func} which allows us to stay within the cone of frequencies where $T$ acts as a bounded operator. Details are provided below.

We define the space of solutions to the relaxed problem. Let $U \in X_0$ if
\begin{enumerate}
  \item $U \in C^\infty(\R \times \T^n)$;
  \item $U(y) = A_0$, for all $y\not \in \O_T$, and $\Rg(U) \ss \U$;
  \item $\overline{\supp\{ \widehat{U(t,\cdot)} \} \backslash \{0\} } \ss W_1 \cup (-W_1) \cup W_2 \cup (-W_2)$, for all $t\in(0,T)$;
  \item $\int_{\T^n} \th(t,x)dx = \int_{\T^n} u(t,x)dx = 0$, for all $t\in (0,T)$; 
  \item $U$ satisfies \eqref{rel1}--\eqref{rel3} in the classical sense.
\end{enumerate}
Clearly, $U \equiv A_0$ belongs to $X_0$.

\begin{lemma}\label{l:x0} Let $U_0 \in X_0$. There exists a sequence $\{U_k\}_{k=1}^\infty \ss X_0$ such that
 \begin{enumerate}
  \item $U_k \ra U_0$ weakly$^*$;
  \item $\liminf_{k\ra \infty}\| U_k\|_{L^2(\O_T)}^2 \geq \|U_0\|_{L^2(\O_T)}^2 + c_0 \int_{\O_T} \dist\{U_0(y), \K \}^2 dy$.
\end{enumerate} 
\end{lemma}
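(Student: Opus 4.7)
The plan is to localize $U_0$ on a fine partition of $\O_T$ and superimpose perturbations produced by \lem{l:func} cell by cell, in the spirit of \cite{ds1,spanish}. Since $U_0 \in X_0$ is smooth and equals $A_0$ off $\O_T$, its range $\calC := U_0(\R \times \T^n)$ is a compact subset of $\U$, so \rem{contT4} supplies a uniform $\e_1 > 0$ with $B_{\e_1}(T4(A)) \ss \U$ for all $A \in \calC$. Fix $\e_0 \in (0, \e_1/4)$, decompose $\O_T = \bigcup_{i=1}^N \O_i$ (up to a null set) into disjoint open cells of oscillation less than $\e_0$, and pick basepoints $A_i := U_0(y_i)$ with $y_i \in \O_i$.

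For each $i$ invoke \lem{l:func} on the triple $(\O_i, \e, A_i)$ with $\e$ small (to be fixed below) to obtain sequences $Z_k^i \in C_0^\infty(\O_T)$ enjoying properties (1)--(5). Setting $P_k := \sum_{i=1}^N Z_k^i$ and $U_k := U_0 + P_k$, all requirements for $U_k \in X_0$ except the range condition $\Rg(U_k) \ss \U$ follow immediately from linear superposition: smoothness, the relations \eqref{rel1}--\eqref{rel3}, zero mean, the Fourier cone condition (via property (5) of \lem{l:func}), and the identity $U_k = A_0$ off $\O_T$. For the range, on each cell $\O_j$ write
\begin{equation*}
U_k(y) = \bigl[U_0(y) + Z_k^j(y)\bigr] + \sum_{i \neq j} Z_k^i(y);
\end{equation*}
the bracket lies in $B_{\e_1/2}(T4(A_j))$ by the robust form of condition (3) given in \rem{contT4} (applicable since $U_0(y)$ is an $\e_0$-perturbation of $A_j$), while the residual has magnitude at most $N\e$. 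Choosing $\e < \e_1/(4N)$ places $U_k(y) \in B_{\e_1}(T4(\calC)) \ss \U$.

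For the energy inequality, the weak-$^*$ convergence $P_k \rightharpoonup^* 0$ gives $\langle U_0, P_k \rangle_{L^2(\O_T)} \to 0$, and hence
\begin{equation*}
\liminf_{k \to \infty} \|U_k\|_{L^2(\O_T)}^2 \geq \|U_0\|_{L^2(\O_T)}^2 + \liminf_{k \to \infty} \|P_k\|_{L^2(\O_T)}^2.
\end{equation*}
The essential disjointness of the $Z_k^i$ (each $\e$-small outside $\O_i$) yields $\|P_k\|_{L^2}^2 \geq \sum_i \|Z_k^i\|_{L^2(\O_i)}^2 - O(N^2 \e)$, and property (4) of \lem{l:func} gives $\|Z_k^i\|_{L^2(\O_i)}^2 \geq c_0^2 c_1 \dist\{A_i, \K\}^2 |\O_i|$. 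Replacing $A_i$ by $U_0(y)$ via $\dist\{A_i, \K\} \geq \dist\{U_0(y),\K\} - \e_0$ and summing, a standard diagonal argument in $\e, \e_0 \to 0$ produces the stated inequality (with a renamed absolute constant).

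The main obstacle is ensuring $\Rg(U_k) \ss \U$ uniformly across all cells: without uniform clearance, the cross-cell residuals $\sum_{i \neq j} Z_k^i(y)$ accumulate linearly in $N$ and could drive $U_k$ out of $\U$, forcing a shrinkage of the perturbation amplitudes that would destroy the energy gain in (2). \rem{contT4} is tailored precisely for this: it furnishes a $\calC$-uniform safety neighborhood of $T4(A)$ independent of $\e$, so the cell count $N$ is absorbed by choosing $\e$ sufficiently small in the final step. A secondary concern, the unboundedness of $T$ on $L^2$ in general, is neutralized by property (5) of \lem{l:func}, which constrains all perturbations to the cone $\pm W_1 \cup \pm W_2$ where $m \in C^1$ is bounded, so $u_k = T[\th_k]$ is pointwise well-defined in $L^\infty$ and the inner-product manipulations are legitimate.
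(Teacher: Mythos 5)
Your proposal is correct and follows essentially the same strategy as the paper: localize $U_0$ on small cells where it has small oscillation, apply Lemma~\ref{l:func} on each, superimpose the weakly$^*$-null perturbations, and invoke the uniform safety margin of Remark~\ref{contT4} to keep $\Rg(U_k) \ss \U$ despite the accumulation of cross-cell residuals. The only cosmetic difference is the covering: the paper selects a finite \emph{disjoint} family of balls capturing at least half of $\int_{\O_T}\dist^2\{U_0,\K\}\,dy$ (absorbing the factor $1/2$ into the absolute constant), whereas you use a full partition of $\O_T$ together with a Lipschitz estimate $\dist\{A_i,\K\}\geq\dist\{U_0(y),\K\}-\e_0$, which then needs $\e_0$ chosen small relative to $\int\dist^2$ (or your diagonal in $\e_0$) to retain a positive constant; both routes yield the stated inequality.
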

\begin{proof}
Let $\calC = \Rg(U_0)$. We have $\calC \ss \U$ compactly. Let $\e_1 = \e_1(\calC)>0$ be defined as in \rem{contT4}, so $B_{\e_1}(T4(\calC)) \ss \U$. By uniform continuity of $U_0$ there exists $r_0>0$ such that $U_0(B_{r_0}(y)) \ss B_{\e_1/4}(U_0(y))$, for all $y\in \O_T$. Let us find a finite collection of disjoint balls $\{B_{r_j}(y_j)\}_{j=1}^J$ with all $r_i < r_0$ such that
$$
2 \sum_{j=1}^J \dist\{ U_0(y_j), \K \}^2 |B_{r_j}(y_j)| >  \int_{\O_T} \dist\{U_0(y), \K \}^2 dy.
$$
For each $j$ we apply \lem{l:func} to $\O = B_{r_j}(y_j)$, $A = U(y_j)$, and $\e>0$ to be determined later. Thus, we find a sequence $Z_{j,k}$ with all the properties listed in the lemma. Moreover, in view of \rem{contT4}, we also have
$$
U_0(y) + Z_{j,k}(y) \in B_{\e_1/2}(T4(\calC)),
$$
for all $y\in B_{r_j}(y_j)$, and all $k\in \N$. Let $Z_k = \sum_{j=1}^J Z_{j, k}$, and $U_k = U_0 + Z_k$. Then for any $y\in B_{r_{j'}}(y_{j'})$ we have
$$
U_k(y) = U_0(y) + Z_{j',k}(y) + \sum_{j\neq j'} Z_{j,k}(y) \in B_{\e_1}(T4(\calC)),
$$
provided $\e < \e_1/(2J)$. If however $y$ does not belong to any of the balls, then 
$$
U_k(y) \in B_{\e J}(\calC) \ss B_{\e_1}(T4(\calC)).
$$
Clearly, outside of $\O_T$ all $Z_k$'s vanish and so $U_k = A_0$. Moreover the Fourier support condition (3) follows automatically from that of $Z_k$'s. This shows that $U_k \in X_0$, and so it remains to verify condition (2) of the lemma. We have since $Z_k \ra 0$ weakly in $L^2$:
$$
\liminf_{k\ra \infty}\| U_k\|_{L^2(\O_T)}^2  =\|U_0\|_{L^2(\O_T)}^2+  \liminf_{k\ra \infty}\| Z_k\|_{L^2(\O_T)}^2, 
$$
whereas for each $k\in \N$
\begin{align*}
\| Z_k\|_{L^2(\O_T)}^2 & \geq \sum_{j=1}^J \int_{B_{r_{j'}}(y_{j'})} | Z_{j,k}(y)|^2 dy - |\O|\e J  \\
& \geq c_1c_0 \sum_{j=1}^J|B_{r_{j'}}(y_{j'})| \dist^2\{U_0(y_j),\K\} - |\O|\e J \\
& \geq c \int_{\O_T} \dist^2\{U_0(y),\K\}  dy - |\O|\e J.
\end{align*}
So, with the appropriate choice of $\e>0$ we obtain (2).
\end{proof}
\def \cali {\mathbb{I}}
Let us consider $X = \overline{X_0}^{*}$, the closure of $X_0$ in the weak$^*$-topology of $L^\infty(\R \times \T^n)$. Let
$$
\cali: X \ra L^2_{loc}( \R \times \T^n)
$$
be the identity map, where $L^2_{loc}$ is endowed with the strong Frechet topology. Let $\f$ be a compactly supported mollifier. Then $\cali_\e U = U \ast \f_\e$ is a continuous map from $X$ into $L^2_{loc}$, and clearly $\cali_\e \ra \cali$ strongly. Thus, $\cali$ is a Baire-1 map, and as such its set of points of continuity is of second Baire category. We claim that any point of continuity $U$ satisfies $U(y) \in \K$ for a.e. $y\in \O_T$. Indeed, let $U_n \ra U$ weakly$^*$, and hence strongly in $L^2(\O_T)$, where each $U_n\in X_0$. Then let, according to \lem{l:x0}, $U_{n,k} \ra U_n$ weakly$^*$ and such that
$$
\liminf_{k\ra \infty}\| U_{n,k}\|_{L^2(\O_T)}^2 \geq \|U_n\|_{L^2(\O_T)}^2 + c_0 \int_{\O_T} \dist\{U_n(y), \K \}^2 dy.
$$
By taking a diagonal subsequence $U'_n$ which converges to $U$ strongly in $L^2(\O_T)$ we can pass to the limit in the above inequality and find
$$
\|U\|_2^2 \geq \|U\|_2^2 +  + c_0 \int_{\O_T} \dist\{U(y), \K \}^2 dy,
$$
which confirms our claim.

Being the $L^2$-strong limit of solutions to \eqref{rel1}--\eqref{rel3}, we see that $U$ satisfies \eqref{rel1} weakly. Moreover, the corresponding scalar and velocity components of $U'_n$, converge strongly in $L^2(\O_T)$:  $u_n \ra u$ and $\th_n \ra \th$. They, therefore, converge in $L^2(\T^n)$ for a.e. $t\in (0,T)$. Since in addition $\overline{\supp\{\hat{\th}_n, \hat{\th}, \hat{u}, \hat{u}_n\} }\ss 
W_1 \cup (-W_1) \cup W_2 \cup (-W_2)$ for a.e. $t$, we obtain by continuity of $T$ on such functions, that $u = T[\th]$. Hence, \eqref{rel3} holds. 

Although $U$ does not belong to $K$ outside $(0,T)$, this does not prevent the pair $(\th,u)$ to be a global weak solutions due to the fact that $\O_T$ has no spacial boundary outside $(0,T)$. To verify this let $\phi \in C_0^\infty(\R \times \T^n)$, then since $\th$ and $u$ vanish outside $(0,T)$, we have
\begin{multline*}
0 = \int_{\R \times \T^n} (\th \phi_t + q \cdot \n_x \phi ) dy =  \int_{(0,T) \times \T^n} (\th \phi_t + \th u \cdot \n_x \phi ) dy \\+ \int_{(0,T)^c \times \T^n} q_0 \n_x \phi dx dt =  \int_{\R \times \T^n} (\th \phi_t + \th u \cdot \n_x \phi ) dy.
\end{multline*}
This finishes the proof of \thm{t:main}.

Finally, we note that a more direct construction of solutions based on an approximation procedure is available. We refer the reader to \cite{spanish,ds1} for details.


\end{document}